\theoremstyle{definition}
\newtheorem{df}{Definition}[section]
\newtheorem{thm}[df]{Theorem}
\newtheorem{cor}[df]{Corollary}
\newtheorem{prop}[df]{Proposition}
\newtheorem{rem}[df]{Remark}
\def\;{{\hspace{0.3ex};\hspace{0.5ex}}}
\def\,{{\hspace{0,3ex},\hspace{0.5ex}}}
\def\({{\hspace{1.2ex}(}}
\def\ph{\varphi}
\def\ra{\rightarrow}
\def\sm{\setminus}
\def\mc{\mathclose}
\def\N{\mathbb{N}}
\def\Inv{{\rm Inv}}
\def\Pol{{\rm Pol}}
\title{Infinitary Theorems in Universal Algebra}
\author{Shohei Izawa\footnote{sa9m02@math.tohoku.ac.jp
}}
\date{}
\begin{document}
\begin{center} {\LARGE Infinitary version of Pol-Inv Galois connection}\end{center}
\begin{center} {\large Shohei Izawa\footnote{sa9m02@math.tohoku.ac.jp}} \end{center}

\begin{abstract}
In this article, we prove infinitary version of one to one correspondence theorem between
clones and relational clones on a fixed possibly infinite set.
We also characterize the relational clone corresponding to the clone of all finitary operations.
By this characterization, we obtain correspondence between finitary clones on a fixed infinite set
and relational clones satisfying some condition.
\end{abstract}

\section{Outline}
In this article, we prove infinitary generalization (Theorem \ref{maincorrespondence}) of
correspondence theorem between clones and relational clones on a fixed set,
proved by \cite{BKKR} and \cite{Gei}. 
So far the author know, only one correspondence result between
clones on an infinite set and other mathematical objects is known.
That is, correspondence between so-called local clones and
local relational clones (\cite{Pos} Theorem 4.1 and 4.2). 
This correspondence only captures particular clones, on the other hand,
correspondence theorem proved in this article captures all clones on a fixed infinite set.

By this correspondence, there is a relational clone that corresponds
to the clone of all finitary operations. This relational clone is characterized as
the set of all ``generalized diagonal relations" (Theorem \ref{finitaryoperation}).
As a corollary, we obtain the correspondence between finitary clones and relational clones
that have all generalized diagonal relations (Corollary \ref{finitaryclones}).

\section{Duality between clones and relational clones}
For this article being self contained, we define concepts appear in the main theorem.

In this article, we use the following notations.
$A\subset B$ denotes the condition $x\in A$ implies $x\in B$,
$A\subsetneq B$ denotes $A\subset B$ and $A\neq B$.
For a set $A$, $|A|$ denotes the cardinality of $A$.

\begin{df}[Clone]
Let $A$ be a set and $\lambda$ be an infinite cardinal.
A set $C$ of $<\mathclose\lambda$-ary operations on $A$,
namely $C\subset \bigcup_{\lambda'<\lambda} A^{A^{\lambda'}}$, is said to be
a $<\mathclose\lambda$-ary (operational) clone on $A$ 
if the following conditions hold:
\begin{enumerate}
\item
For each cardinal $\lambda'<\lambda$ and $i\in\lambda'$, the $i$-th projection $pr_i^{\lambda'}:(a_j)_{j\in\lambda'}\mapsto a_i$
belongs to $C$.
\item
If $\lambda'<\lambda$ and $f,g_i\in C$ for $i\in \lambda''$,
where $\lambda''$ is the arity of $f$, then the composition
\[
f\circ (g_i)_{i\in\lambda'}:(a_j)_{j\in\lambda'}\mapsto 
f(g_i(a_j)_{j\in\lambda'})_{i\in\lambda''}
\]
belongs to $C$.
\end{enumerate}

\end{df}

\begin{df}[Relational Clone]
Let $A$ be a set and $\kappa$ be an infinite cardinal.
A set $R$ of $<\mathclose\kappa$-ary relations on $A$,
that is, $R\subset \bigcup_{\kappa'<\kappa}{\cal P}(A^{\kappa'})$, is said to be
a $<\mc\kappa$-ary relational clone on $A$
if the following conditions hold:

If $\{r_k\}_{k\in K}\subset R$, $r_k\subset A^{\kappa_k}$ and
a relation $r\subset A^{\kappa'}$ is defined by the following form
\[
r=\{(a_{j'})_{j'\in\kappa'}\in A^{\kappa'}\mid 
  \exists (a_{\tilde{j}})_{\tilde{j}\in\tilde{\kappa}} 
  \bigwedge_{u\in U} (a_{f(u,j)})_{j\in \kappa_{g(u)}}\in r_{g(u)}\},
\]
where $g:U\ra K$ and $f:\bigcup_{u\in U}\{u\}\times \kappa_{g(u)}\ra \kappa'\amalg \tilde{\kappa}$
(This condition is referred as ``$r$ is defined from $\{r_k\}_{k\in K}$
by primitive positive formula of $L_{\infty,\infty}$-logic."),
then $r\in R$ holds.
%
\end{df}

\begin{df}[Polymorphism, Invariant Relation]
Let $A$ be a set and $\lambda,\kappa$ be cardinals.
\begin{enumerate}
\item
Let $F$ be a set of $<\mc\lambda$-ary operations on $A$.
A $\kappa$-ary relation $r$ on $A$ is said to be invariant to $F$ if
\[
\forall i\in \lambda'; (a_{ij})_{j\in\kappa}\in r
\ \  \Longrightarrow\ \ 
(f(a_{ij})_{i\in\lambda'})_{j\in\kappa}\in r
\]
hold for all $f\in F$.
The set of all $\kappa$-ary invariant relations of $F$ is denoted by $\Inv_\kappa(F)$.
We define $\Inv_{<\kappa}(F):=\bigcup_{\kappa'<\kappa} \Inv_{\kappa'}(F)$.
\item
Let $R$ be a set of $<\mathclose\kappa$-ary relations on $A$.
A $\lambda$-ary operation $f$ on $A$ is said to be a polymorphism of $R$ if
\[
\forall i\in \lambda; (a_{ij})_{j\in\kappa'}\in r
\ \  \Longrightarrow\ \ 
(f(a_{ij})_{i\in\lambda})_{j\in\kappa'}\in r
\]
hold for all $r\in R$.
The set of all $\lambda$-ary polymorphisms of $R$ is denoted by $\Pol_\lambda(R)$.
We define $\Pol_{<\lambda}(R):=\bigcup_{\lambda'<\lambda} \Pol_{\lambda'}(R)$.
\end{enumerate}
\end{df}

\begin{rem}
For simplifying description, we also use notation such as $\Inv_X(C)$,
where $C$ is a set of operations and $X$ is an arbitrary set.
That is defined as
\[
\Inv_X(C):=\{r\in {\cal P}(A^X)\mid 
  \{(a_i)_{i\in |X|}\mid (a_{\ph(x)})_{x\in X}\in r\}\in \Inv_{|X|}(C)\},
\]
where $\ph$ is a bijection $X\ra |X|$.
This does not depend on the choice of bijection $\ph$.

Similarly, the set of set-indexed polymorphisms is defined as
\[
\Pol_X(R):=\{f:A^X\ra A\mid
 [(a_i)_{i\in |X|}\mapsto f(a_{\ph(x)})_{x\in X}]\in \Pol_{|X|}(R)\}.
\]
\end{rem}

The following proposition is easily follows from definition.
\begin{prop}
Let $A$ be a set and $\lambda,\kappa$ be a cardinal.
\begin{enumerate}
\item
For a set $F$ of $<\mathclose\lambda$-ary operations on $A$, $\Inv_{<\kappa}(F)$ is
a $<\mathclose\kappa$-ary relational clone on $A$.
\item
For a set $R$ of $<\mathclose\kappa$-ary relations on $A$, $\Pol_{<\lambda}(F)$ is
a $<\mathclose\lambda$-ary operational clone on $A$.
\end{enumerate}
\end{prop}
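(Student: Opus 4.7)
The plan is to mimic the finitary argument; the infinite cardinals introduce only bookkeeping, not genuine difficulty, so the main task is to set up the indices cleanly enough that the chase is transparent.

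For part (1), the projections and composition conditions are already packaged into the single primitive positive closure condition, so one only needs to check closure under primitive positive formulas. Given $\{r_k\}_{k\in K}\subset \Inv_{<\kappa}(F)$ and $r$ defined by the formula with data $(U, g, h, \kappa', \tilde{\kappa})$ (I rename the paper's $f$ in the p.p.\ formula to $h$ to avoid a clash with operation names), I would take an arbitrary $f\in F$ of arity $\lambda'$ and a matrix $(a_{ij'})_{i\in\lambda',\,j'\in\kappa'}$ whose rows $(a_{ij'})_{j'\in\kappa'}$ lie in $r$. For each $i\in\lambda'$ I choose witnesses $(a_{i\tilde{j}})_{\tilde{j}\in\tilde{\kappa}}$ making the conjunction hold, then set $c_\xi:=f(a_{i\xi})_{i\in\lambda'}$ for $\xi\in\kappa'\amalg\tilde{\kappa}$. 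The point is that for every $u\in U$ the rows $(a_{i,h(u,j)})_{j\in\kappa_{g(u)}}$ are in $r_{g(u)}$ by construction, so invariance of $r_{g(u)}$ under $f$ gives $(c_{h(u,j)})_{j\in\kappa_{g(u)}}\in r_{g(u)}$; hence $(c_{\tilde{j}})_{\tilde{j}\in\tilde{\kappa}}$ witnesses that $(c_{j'})_{j'\in\kappa'}\in r$, showing $r$ is invariant under $f$.

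For part (2), the projection $pr_i^{\lambda'}$ sends any $\lambda' \times \kappa'$ matrix to its $i$-th row, which is in $r$ by hypothesis, so $pr_i^{\lambda'}\in\Pol_{\lambda'}(R)$ for every $r\in R$. For composition, given $f\in\Pol_{\lambda''}(R)$ and $g_i\in\Pol_{\lambda'}(R)$ for $i\in\lambda''$, I would take $r\in R$ of arity $\kappa'$ and a matrix $(a_{kj'})_{k\in\lambda',\,j'\in\kappa'}$ with all rows in $r$, define the intermediate entries $b_{ij'}:=g_i(a_{kj'})_{k\in\lambda'}$, observe that each row $(b_{ij'})_{j'\in\kappa'}$ lies in $r$ by the polymorphism property of $g_i$, and finally conclude via $f$ that $(f(b_{ij'})_{i\in\lambda''})_{j'\in\kappa'}\in r$, which is exactly the tuple obtained by applying $f\circ(g_i)_{i\in\lambda''}$ column-wise. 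This lands in $\Pol_{\lambda'}(R)\subset\Pol_{<\lambda}(R)$.

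The only real obstacle is notational discipline: the p.p.\ formula mixes a variable-index function, a relation-index function, existential witnesses, and a polymorphism all indexed by different cardinals, so renaming the inner $f$ to $h$ and fixing a clean naming convention for the $\kappa' \amalg \tilde{\kappa}$-indexed "big tuple" $c_\xi$ is essential. Once that is done, both items reduce to one-line diagram chases identical to the classical finite-arity proof.
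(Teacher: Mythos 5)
Your proof is correct and is exactly the routine index chase that the paper omits with the phrase ``easily follows from definition'': part (1) applies $f$ columnwise to the chosen witnesses and uses invariance of each $r_{g(u)}$, and part (2) is the standard two-step chase for projections and composition. Nothing further is needed.
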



We complete to prepare to describe the correspondence theorem
between clones and relational clones. The theorem is described as follows.
\begin{thm}\label{maincorrespondence}
Let $A$ be a set and $\lambda$ be a strong limit cardinal that satisfies $\lambda>|A|$.
\begin{enumerate}
\item \label{polinv}
If $C$ is a $<\mathclose\lambda$-ary operational clone, then $\Pol_{<\lambda}(\Inv_{<\lambda}(C))=C$ holds.
\item \label{invpol}
If $R$ is a $<\mathclose\lambda$-ary relational clone, then $\Inv_{<\mathclose\lambda}(\Pol_{<\mathclose\lambda}(R))=R$ holds.
\end{enumerate}
\end{thm}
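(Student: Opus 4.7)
For both parts, the inclusions $C\subseteq\Pol_{<\lambda}(\Inv_{<\lambda}(C))$ and $R\subseteq\Inv_{<\lambda}(\Pol_{<\lambda}(R))$ are immediate from the definitions of $\Pol$ and $\Inv$, so only the reverse inclusions require real work. For part (\ref{polinv}) my plan is the standard ``free relation'' trick, lifted to the infinitary setting. Given $f\in\Pol_{<\lambda}(\Inv_{<\lambda}(C))$ of arity $\lambda'<\lambda$, I would form the relation
\[
\Gamma\;=\;\{(g(\vec a))_{\vec a\in A^{\lambda'}}\mid g\in C,\ \operatorname{ar}(g)=\lambda'\}\;\subseteq\;A^{A^{\lambda'}}.
\]
The arity $|A|^{\lambda'}$ stays below $\lambda$ because $\lambda$ is strong limit and $|A|,\lambda'<\lambda$, so $\Gamma$ is a legal $<\mathclose\lambda$-ary relation. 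Closure of $C$ under composition gives $\Gamma\in\Inv_{<\lambda}(C)$, the projection tuples $(a_i)_{\vec a=(a_j)_j}$ lie in $\Gamma$ since $pr_i^{\lambda'}\in C$, and applying $f$ coordinatewise to these $\lambda'$ tuples puts $(f(\vec a))_{\vec a\in A^{\lambda'}}\in\Gamma$. By the very definition of $\Gamma$ this forces $f=g$ for some $g\in C$, completing this direction.

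For part (\ref{invpol}) take $s\in\Inv_{<\lambda}(\Pol_{<\lambda}(R))$ of arity $\kappa'<\lambda$, enumerate $s=\{t_\alpha\mid\alpha<\mu\}$ with $\mu\leq|A|^{\kappa'}<\lambda$, and form the columns $c_j:=(t_\alpha(j))_{\alpha<\mu}\in A^\mu$. Applying the $\Pol_\mu(R)$-invariance of $s$ to the full list $(t_\alpha)_{\alpha<\mu}$ shows $(g(c_j))_{j<\kappa'}\in s$ for every $g\in\Pol_\mu(R)$, while $g=pr_\alpha^\mu$ recovers $t_\alpha$; hence
\[
s\;=\;\{(g(c_j))_{j<\kappa'}\mid g\in\Pol_\mu(R)\}.
\]
The plan is then to identify the ``polymorphism relation''
\[
\mathcal G\;:=\;\{(g(\vec a))_{\vec a\in A^\mu}\mid g\in\Pol_\mu(R)\}\;\subseteq\;A^{A^\mu}
\]
as an element of $R$; its arity $|A|^\mu<\lambda$ is again in range. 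Once $\mathcal G\in R$, the relation $s$ becomes $L_{\infty,\infty}$-pp-definable from $\mathcal G$ by existentially quantifying over the coordinates $\vec a\in A^\mu\setminus\{c_j\mid j<\kappa'\}$ of a generic $\mathcal G$-tuple and identifying its $c_j$-coordinate with the free variable $x_j$.

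The main obstacle is therefore the direct pp-presentation of $\mathcal G$. My plan is to exploit the infinitary conjunction available in $L_{\infty,\infty}$: for each $r\in R$ of arity $\kappa''$, the condition ``$(y_{\vec a})_{\vec a}$ preserves $r$'' rewrites as
\[
\bigwedge_{(\vec b_1,\ldots,\vec b_\mu)\in r^\mu}\;r\bigl((y_{(\vec b_i(k))_{i<\mu}})_{k<\kappa''}\bigr),
\]
each conjunct being a genuine pp-atom in which the arguments of $r$ are variable identifications of coordinates of $y$ dictated by the chosen element of $r^\mu$. Strong-limit arithmetic yields $|r^\mu|\leq|A|^{\kappa''\cdot\mu}<\lambda$ and $|R|\leq\lambda$, so the full conjunction $\bigwedge_{r\in R}(y\text{ preserves }r)$ fits the pp-definition scheme of a $<\mathclose\lambda$-ary relational clone and places $\mathcal G$ in $R$. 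The delicate points I expect to spend effort on are the bookkeeping of the variable identifications inside this nested conjunction so that it matches exactly the format prescribed in the definition of relational clone, and a careful verification at each step that the indexing sets and arities involved truly remain strictly below $\lambda$.
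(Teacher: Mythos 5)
Your proof is correct, and while part (\ref{polinv}) is essentially the paper's own argument (your $\Gamma$ is exactly the paper's $\Gamma_{\lambda'}(C)$, used in the same way), your part (\ref{invpol}) takes a genuinely different route at the key step. Both you and the paper reduce the problem to showing that the graph relation $\mathcal G=\Gamma_\mu(\Pol_{<\lambda}(R))\subset A^{A^\mu}$ of the $\mu$-ary polymorphisms lies in $R$, and then recover $s$ as a projection-with-identifications of $\mathcal G$ along the column map $j\mapsto c_j$. But to get $\mathcal G\in R$ the paper argues indirectly: it takes the \emph{minimum} member of $R$ containing $\Gamma_\mu(\Pol_{<\lambda}(R))$ (which exists because $R$ is closed under arbitrary intersections) and derives a contradiction from a single stray element $a$ by cutting the relation down with one pp-constraint built from a relation $s\in R$ that $f_a$ fails to preserve --- a direct transcription of the classical finite-set proof. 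You instead define $\mathcal G$ outright by the quantifier-free infinitary pp-formula
\[
\bigwedge_{r\in R}\ \bigwedge_{(\vec b_1,\ldots,\vec b_\mu)\in r^{\mu}} r\bigl((y_{(\vec b_i(k))_{i<\mu}})_{k<\kappa''}\bigr),
\]
i.e., as ``the set of codes of functions preserving every member of $R$,'' which is visibly the same set as $\mathcal G$. This is legitimate: the paper's definition of relational clone places no cardinality bound on the conjunction index set $U$ (only the arity of the resulting relation matters, and $|A|^\mu<\lambda$ by the strong-limit hypothesis), so the bounds $|r^\mu|<\lambda$ and $|R|\le\lambda$ you flag as delicate are not actually needed, and the remaining bookkeeping is exactly the map $f(u,\cdot):k\mapsto(\vec b_i(k))_{i<\mu}$ into the variable set $A^\mu$, with $\tilde\kappa=\emptyset$. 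Your version is arguably cleaner in the infinitary setting and makes explicit that unbounded conjunctions are what powers part (\ref{invpol}); the paper's minimality argument buys a proof that isolates one violated constraint at a time and stays closer to the finitary original.
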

Note that a cardinal $\lambda$ is said to be strong limit if 
$\lambda'<\lambda$ implies $2^{\lambda'}<\lambda$.
The following proof is obtained by basically the same way
as the proof of finitary and on finite set version
described in \cite{Lau} Chapter 2 of Part II.

\begin{proof}
\ref{polinv}. $\Pol_{<\lambda}(\Inv_{<\lambda}(C))\supset C$ is easy.

We prove the reverse inclusion $\Pol_{<\lambda}(\Inv_{<\lambda}(C))\subset C$.
Let $\kappa$ be a cardinal that $\kappa<\lambda$.
Define a $|A|^\kappa$-ary relation $\Gamma_\kappa(C)$ by
\[
\Gamma_\kappa(C):=\{(f(j(i))_{i\in \kappa})_{j\in {A^\kappa}}\mid f\in C_\kappa\},
\] 
Where $C_\kappa$ is the set of all $\kappa$-ary operations belonging to $C$,
i.e., $C_\kappa:=C\cap A^{A^\kappa}$.
Then the next claim holds.
\renewcommand{\labelenumi}{(\arabic{enumi})}
\begin{enumerate}
\item \label{graphisclosed}
$\Gamma_\kappa\in\Inv_{A^\kappa}(C)$.
\item \label{cdefinedgamma}
For any $f:A^\kappa\ra A$, $f\in C_\kappa$ holds if and only if $f$ preserves $\Gamma_\kappa(C)$.
\end{enumerate}
\renewcommand{\labelenumi}{\arabic{enumi}.}

(\ref{graphisclosed}) easily follows from the assumption $C$ is closed under composition.

(\ref{cdefinedgamma}) is proved as follows.
By definition of $\Gamma_\kappa(C)$ and the assumption that $C$ is closed under composition, 
$f\in C_\kappa$ implies $f$ preserves $\Gamma_\kappa(C)$.

To prove the converse, notice that for each $i_0\in\kappa$, 
\[
(j(i_0))_{j\in {A^\kappa}}=(\pi_{i_0}^{A^\kappa}(j(i))_{i\in \kappa})_{j\in {A^\kappa}}
 \in \Gamma_\kappa(C)
\]
holds. Therefore, if $f\not\in C_\kappa$, then $(j(i))_{j\in {A^\kappa}}\in \Gamma_\kappa(C)$
but $(f(j(i))_{i\in \kappa})_{j\in {A^\kappa}}\not\in \Gamma_\kappa(C)$.
That means $f$ does not preserve $\Gamma_\kappa(C)$.

By these claims, we conclude that
\[
f\in\Pol_\kappa(\Inv_{<\lambda}(C))
\ \Rightarrow\ f\in \Pol_\kappa(\Gamma_\kappa(C))
\ \Rightarrow\ f\in C.
\]

\ref{invpol}. $\Inv_{<\lambda}(\Pol_{<\lambda}(R))\supset R$ is easy.

To prove reverse inclusion, first we prove 
$\Gamma_\kappa(\Pol_{<\lambda}(R))\in R$ for arbitrary $\kappa<\lambda$.
Let $r$ be the minimum relation that $r\supset \Gamma_\kappa(\Pol_{<\lambda}(R))$ and $r\in R_{\kappa}$.
We should prove $r=\Gamma_\kappa(\Pol_{<\lambda}(R))$.

Assume $a=(a_j)_{j\in A^\kappa}\in r\sm \Gamma_\kappa(\Pol(R))$ exists.
Then, by Claim (\ref{cdefinedgamma}),
the operation $f_a:j\mapsto a_j$ ($A^\kappa\ra A$) does not belong to $\Pol_{\kappa}(R)$.
Therefore, there exist $s\in R_{\mu}$ ($\mu<\lambda$) and 
$(b_{i,k})_{i\in\kappa,k\in\mu}\in A^{\kappa\mu}$ that satisfy the following conditions:
\begin{itemize}
\item
$(b_{i,k})_{k\in\mu}\in s$ for all $i\in\kappa$.
\item
$(f_a(b_{i,k})_{i\in\kappa})_{k\in\mu}\not\in s$.
\end{itemize}
Let $\ph:\mu\ra A^\kappa$ be the unique mapping such that $b_{i,k}=\ph(k)(i)$ and
define an relation $\tilde{r}$ by
\[
\tilde{r}:=\{(x_j)_{j\in A^\kappa}\mid (x_j)_{j\in A^\kappa}\in r\land 
\exists (y_k)_{k\in\mu}\in s(\bigwedge_{k\in\mu} y_k=x_{\ph(k)})\}.
\]
Then the following assertions hold.
\begin{itemize}
\item
$\tilde{r}\in R$.
\item
$\Gamma_\kappa(\Pol_{<\lambda}(R))\subset \tilde{r}\subset r$.
\item
$a\not\in \tilde{r}$.
\end{itemize}
These properties contradict to minimumity of $r$.
Therefore, $\Gamma_\kappa(\Pol_{<\lambda}(R))=r\in R$ holds if these assertions are proved.

$\tilde{r}\in R$ follows from $r,s\in R$. $\tilde{r}\subset r$ is trivial.

We prove $\Gamma_\kappa(\Pol_{<\lambda}(R))\subset \tilde{r}$.
Note that $\tilde{r}$ is closed under operations belonging to \mbox{$\Pol_{<\lambda}(R)\subset \Pol_{<\lambda}(\tilde{r})$}.
Adding this and the fact that $\Gamma_\kappa(\Pol_{<\lambda}(R))$ is the minimum relation that contains 
\mbox{$\{(j(i_0))_{j\in A^\kappa}\mid i_0\in \kappa\}$} and closed under every operations belong to $\Pol_{<\lambda}(R)$, 
it is sufficient to show that $(j(i_0))_{j\in A^\kappa}\in \tilde{r}$ for all $i_0\in\kappa$.
It follows from
\[
(j(i_0))_{j\in A^\kappa}\in \Gamma_\kappa(\Pol_{<\lambda}(R))\subset r\text{ and }(\ph(k)(i_0))_{k\in\mu}=(b_{i_0,k})_{k\in\mu}\in s.
\]
Finally, we prove $a\not\in r$. It follows from 
\[
(f_a(b_{i,k})_{i\in\kappa})_{k\in\mu}=(f_a(\ph(k)(i))_{i\in\kappa})_{k\in\mu}
=(a_{\ph(k)})_{k\in\mu}\not\in s.
\]
The proof of $\Gamma_\kappa(\Pol_{<\lambda}(R))\in R$ is completed.

Next, we prove $\Inv_{<\lambda}(\Pol_{<\lambda}(R))\subset R$.
Let $r\in\Inv_\kappa(\Pol_{<\lambda}(R))$ and $r=\{(a_{i,j})_{j\in\kappa}\}_{i\in I}$ be an enumeration of $r$.
Let $\ph:\kappa\ra A^I$ be the unique mapping satisfying $\ph(j)(i)=a_{i,j}$.
Define 
\[
\tilde{r}:=\{ (x_j)_{j\in\kappa}\in A^\kappa \mid
  \exists(y_k)_{k\in A^I}\bigwedge_{j\in\kappa}( (y_k)_{k\in A^I} \in \Gamma_I(\Pol_{<\lambda}(R))
          \land y_{\ph(j)}=x_j)\} .
\]
Clearly $\tilde{r}\in R$ holds.
We prove $r=\tilde{r}$.
For $(a_{i_0,j})_{j\in\kappa}$, $(y_k)_{k\in A^I}=(k(i_0))_{k\in A^I}\in A^{A^I}$ satisfies
\[
(y_k)_{k\in A^I}\in \Gamma_I(\Pol_{<\lambda}(R)) \text{ and }y_{\ph(j)}=a_{i_0,j}.
\]
Therefore $(a_{i_0,j})\in\tilde{r}$ holds and $r\subset \tilde{r}$ is proved.

Finally, we prove $\tilde{r}\subset r$.
Because $\Gamma_I(\Pol_{<\lambda}(R))$ is generated (as a $\Pol_{<\lambda}(R)$-algebra) by \mbox{$\{(k(i))_{k\in A^I}\mid i\in I\}$},
the set $\{(\ph(j)(i))_{j\in \kappa}\mid i\in I\}=r$ is a set of generator of $\tilde{r}$.
(Because $\{(\ph(j)(i))_{j\in \kappa}\mid i\in I\}$ is
the image of $\{(k(i))_{k\in A^I}\mid i\in I\}$ by the projection $A^{A^I}\ra A^\kappa$,
which is a homomorphism between $\Pol_{<\lambda}(R)$-algebras, induced by $\ph:\kappa\ra A^I$,
and $\tilde{r}$ is the image of $\Gamma_I(\Pol_{<\lambda}(R))$ by the same projection.)
By this fact and $r$ is closed under operations belonging to $\Pol_{<\lambda}(R)$, $\tilde{r}\subset r$ holds.
\end{proof}

\section{Characterization of finitary clones}
In this section, we describe the relational clone corresponding to 
the clone of all finitary operations.
That is the set of all ``generalized diagonal relations" defined as follows.
\begin{df}
Let $A$ be a set, $\kappa\geq |A|$ be an infinite cardinal.
$D^\kappa_{{\rm fin}}$ denotes the following $\kappa$-ary relational clone:
\begin{itemize}
\item
For a set ${\cal E}$ of equivalence relations on $\kappa$, we define
$D_{{\cal E}}:=\{(a_i)_{i\in\kappa}\in A^\kappa \mid \{(i,j)\mid a_i=a_j\}\in{\cal E}\}$.
\item
$D_{{\rm fin}}^\kappa:=\{D_{{\cal E}}\mid {\cal E}\text{ is an ideal of the lattice of all equivalence relations on }\kappa\}$.
\end{itemize}
\end{df}

\begin{prop}\label{finitaryoperation}
Let $A$ be a set, $\lambda,\kappa$ be infinite cardinals and $f:A^\lambda \ra A$.
Assume $|A|,\lambda\leq\kappa$.
Then the following conditions are equivalent.
\begin{enumerate}
\item
$f$ is essentially finitary, namely
there exists a finite set $I\subset \lambda$ such that for any $(a_i)_{i\in\lambda},(b_i)_{i\in\lambda}\in A^\lambda$,
$(a_i)_{i\in I}=(b_i)_{i\in I}$ implies $f(a_i)_{i\in\lambda}=f(b_i)_{i\in\lambda}$.
\item
$f\in\Pol_\lambda D^\kappa_{{\rm fin}}$.
\end{enumerate}
\end{prop}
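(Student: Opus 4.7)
The plan is to prove (1) $\Rightarrow$ (2) directly and (2) $\Rightarrow$ (1) by contrapositive, the latter being the substantive half. For any tuple $(a_j)_{j\in\kappa}$, I write its kernel as the equivalence relation $\{(j,j')\mid a_j=a_{j'}\}$ on $\kappa$, so that $D_{\mathcal{E}}$ is precisely the set of $\kappa$-tuples whose kernel lies in $\mathcal{E}$.

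For (1) $\Rightarrow$ (2), suppose $f$ depends only on coordinates in a finite $I\subset\lambda$. Fix any ideal $\mathcal{E}$ and any rows $(a_{ij})_{j\in\kappa}\in D_\mathcal{E}$ for $i\in\lambda$; let $\theta_i$ be the kernel of the $i$-th row, so $\theta_i\in\mathcal{E}$. For any pair $(j,j')\in\bigcap_{i\in I}\theta_i$, the $\lambda$-tuples $(a_{ij})_i$ and $(a_{ij'})_i$ coincide on $I$ and therefore give the same $f$-value, so the output kernel $\eta$ contains $\bigcap_{i\in I}\theta_i$. The ideal properties of $\mathcal{E}$---closure under finite intersections of its members and closure under passage to coarser (larger under $\subseteq$) equivalences---then force $\bigcap_{i\in I}\theta_i\in\mathcal{E}$ and hence $\eta\in\mathcal{E}$, as desired.

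For (2) $\Rightarrow$ (1), I argue the contrapositive. Assume $f$ is not essentially finitary: for each finite $J\subset\lambda$ one can pick $s^J, t^J\in A^\lambda$ with $s^J_i=t^J_i$ for all $i\in J$ and $f(s^J)\neq f(t^J)$. Since $|[\lambda]^{<\omega}|=\lambda\leq\kappa$, fix an injection assigning to each finite $J\subset\lambda$ two distinct indices $j_J^+, j_J^-\in\kappa$, and pick some $c\in A$ as the default value on the remaining coordinates. Define the $\lambda\times\kappa$ matrix by $a_{i, j_J^+}:=s^J_i$, $a_{i, j_J^-}:=t^J_i$, and $a_{ij}:=c$ otherwise; let $\theta_i$ be the kernel of the resulting $i$-th row, and set
\[
\mathcal{E}:=\{\theta \mid \theta\supseteq\bigcap_{i\in J}\theta_i \text{ for some finite } J\subset\lambda\}.
\]
A short check shows $\mathcal{E}$ is an ideal (closed under finite intersections by taking unions of indexing sets, and upward-closed under $\subseteq$ by construction) and every $\theta_i\in\mathcal{E}$, so each row lies in $D_\mathcal{E}$. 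On the other hand, for any finite $J\subset\lambda$ the pair $(j_J^+, j_J^-)$ sits in $\bigcap_{i\in J}\theta_i$ (because $s^J$ and $t^J$ agree on $J$) but is excluded from the output kernel $\eta$ (because $f(s^J)\neq f(t^J)$). Hence $\eta$ contains no $\bigcap_{i\in J}\theta_i$, so $\eta\notin\mathcal{E}$ and $f$ does not preserve $D_\mathcal{E}\in D^\kappa_{\rm fin}$.

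The main obstacle is designing $\mathcal{E}$ and the input matrix in lockstep, using independently chosen witness pairs $(s^J, t^J)$ for every finite $J\subset\lambda$; this decoupling is what prevents any single finite intersection of row kernels from certifying $\eta\in\mathcal{E}$. A small auxiliary point is the cardinal arithmetic $|[\lambda]^{<\omega}|=\lambda\leq\kappa$, which is what lets the whole construction fit inside the $\kappa$ many coordinates available.
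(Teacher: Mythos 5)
Your proof is correct, and the forward direction (essentially finitary $\Rightarrow$ polymorphism) coincides with the paper's argument. The contrapositive direction, however, takes a genuinely different route. The paper first extracts a \emph{minimal} infinite supporting set $I$ of cardinality $\mu$, enumerates it by the initial ordinal $\alpha$ of $\mu$, builds a transfinite sequence of witness pairs $(a_{\cdot,\beta},b_{\cdot,\beta})$ agreeing on longer and longer initial segments $\{i_\gamma\mid\gamma\leq\beta\}$, and takes for $\mathcal{E}$ the equivalence relations containing a tail of the designated pairs $(k_{\gamma,0},k_{\gamma,1})$. You instead index one witness pair $(s^J,t^J)$ by \emph{each} finite $J\subseteq\lambda$ (using only the raw negation of essential finitarity, with no minimality argument), pack all of them into the $\kappa$ columns via $|[\lambda]^{<\omega}|=\lambda\leq\kappa$, and take for $\mathcal{E}$ exactly the ideal generated by the row kernels $\theta_i$; the point is then that every candidate generator $\bigcap_{i\in J}\theta_i$ of $\mathcal{E}$ contains the pair $(j^+_J,j^-_J)$, which the output kernel omits. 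Your construction buys two things: it avoids the transfinite bookkeeping entirely, and because $\mathcal{E}$ is by definition the ideal generated by the $\theta_i$, the membership of every row in $D_{\mathcal{E}}$ is automatic --- whereas in the paper's construction one must still check the rows $i\notin I$ lie in $D_{\mathcal{E}}$, which requires arranging $a_{i,\beta}=b_{i,\beta}$ off $I$ as well (possible since $I$ supports $f$, but left implicit there). The paper's approach, in exchange, produces a more explicitly described ideal (a ``tail filter'' along a single well-ordered sequence of pairs) rather than one generated abstractly by the row kernels. Both are valid; yours is the more economical argument.
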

\begin{proof}
1 $\Rightarrow$ 2. Let $f:A^\lambda\ra A$ be essentially finitary, depends on finite components $I\subset \lambda$.
Let ${\cal E}$ be an ideal of the lattice of equivalence relations on $\kappa$.
Suppose $(a_{ij})_{j\in\kappa}\in D_{{\cal E}}$ for $i\in \lambda$,
namely $\{(j_1,j_2)\mid a_{ij_1}=a_{ij_2}\}\in{\cal E}$ for all $i\in\lambda$.
Particularly, $\{(j_1,j_2)\mid a_{ij_1}=a_{ij_2}\}\in{\cal E}$ hold for all $i\in I$.
Because ${\cal E}$ is an ideal and $I$ is finite,
$E:=\bigcap_{i\in I}\{(j_1,j_2)\mid a_{ij_1}=a_{ij_2}\}\in{\cal E}$.
For each pair $(j_1,j_2)\in E$,
$f(a_{ij_1})_{i\in\lambda}=f(a_{ij_2})_{i\in\lambda}$ holds.
That means $(f(a_{ij})_{i\in\lambda})_{j\in\kappa}\in D_{{\cal E}}$.

2 $\Rightarrow$ 1. Suppose $f$ is not essentially finitary.
Let $\mu$ be the minimum cardinal that satisfies the following condition:
There is a set $I\subset \lambda$ that $|I|=\mu$ and the implication
\begin{equation}\label{essentialvariables}
a_i=b_i\text{ (for all }i\in I\text{) }
\Longrightarrow\  f(a_i)_{i\in\lambda}=f(b_i)_{i\in\lambda}
\end{equation}
holds.
Let $I\subset \lambda$ be a set satisfying $|I|=\mu$ and Implication (\ref{essentialvariables}).
Since $f$ is not essentially finitary, $\mu$ is an infinite cardinal.
Let $\alpha$ be the minimum ordinal that has the cardinality $\mu$
and fix a bijection 
$i:\alpha\ra I$ $(\beta\mapsto i_\beta)$.

By the definition and assumption for $\mu,\alpha,I$ and the mapping $i$, 
there are 
tuples $(a_{i,\beta})_{i\in \lambda,\beta\in \alpha},
(b_{i,\beta})_{i\in \lambda,\beta\in \alpha}$
of elements of $A$ satisfying the following conditions:
\begin{itemize}
\item
If $i\in \{i_\gamma\mid \gamma\leq \beta\}$ then $a_{i,\beta}=b_{i,\beta}$.
\item
$f(a_{i,\beta})_{i\in\lambda}\neq f(b_{i,\beta})_{i\in\lambda}$ hold for any $\beta<\alpha$.
\end{itemize}

Let $k:\alpha\times \{0,1\}\hookrightarrow \kappa$ $((\beta,e)\mapsto k_{\beta,e})$
be an injection.
Define
\[
{\cal E}:=\{E\mid E\text{ is an equivalence relation on }\kappa, 
\exists \beta<\alpha; \beta<\gamma<\alpha\Rightarrow (k_{\gamma,0},k_{\gamma,1})\in E\}.
\]
Then ${\cal E}$ is an ideal of equivalence relations on $\kappa$.
Fix an element $c_0\in A$ and
define $(c_{i,k})_{i\in\lambda,k\in\kappa}\in A^{\lambda\kappa}$ as follows:
\[
c_{i,k}:=
\begin{cases}
a_{i,\beta}& \text{ if there is }\beta \text{ such that }k=k_{\beta,0},\\
b_{i,\beta}& \text{ if there is }\beta \text{ such that }k=k_{\beta,1},\\
c_0& \text{ otherwise}.
\end{cases}
\]
Then $(c_{ik})_{k\in\kappa}\in D_{{\cal E}}$ for each $i\in\lambda$. 
However $(f(c_{i,k})_{i\in\lambda})_{k\in\kappa}\not\in D_{{\cal E}}$.
It means $f\not\in \Pol(D_{{\cal E}})$.
\end{proof}

As a corollary of this proposition, we obtain correspondence between finitary clones
and relational clones that contain all generalized diagonal relations.
\begin{cor}\label{finitaryclones}
Let $A$ be a set, $\lambda$ be a power limit cardinal that $\lambda>|A|$.
Then $\Pol_{<\lambda}$ and $\Inv_{<\lambda}$ are mutually inverse mapping between
the set of all essentially finitary clones on $A$ and relational clones
that contain $D^{<\lambda}_{{\rm fin}}:=\bigcup_{\lambda'<\lambda} D^{\lambda'}_{{\rm fin}}$.
\end{cor}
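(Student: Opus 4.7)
The plan is to combine Theorem \ref{maincorrespondence} with Proposition \ref{finitaryoperation}: the former gives a bijection between all $<\mathclose\lambda$-ary clones and all $<\mathclose\lambda$-ary relational clones via $\Pol_{<\lambda}$ and $\Inv_{<\lambda}$, and the latter will let us identify essentially finitary clones with those whose relational image contains $D^{<\lambda}_{\mathrm{fin}}$. Concretely, the central equivalence I would prove is: a $<\mathclose\lambda$-ary clone $C$ consists entirely of essentially finitary operations if and only if $\Inv_{<\lambda}(C) \supseteq D^{<\lambda}_{\mathrm{fin}}$.

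For the forward implication, let $f \in C$ have arity $\lambda_f < \lambda$ and let $D_{\mathcal{E}} \in D^{\lambda'}_{\mathrm{fin}}$ be arbitrary with $\lambda' < \lambda$. The implication $(1)\Rightarrow(2)$ of Proposition \ref{finitaryoperation} asserts that every essentially finitary operation preserves $D_{\mathcal{E}}$; inspecting its proof, one sees that this direction never uses the size hypothesis $|A|, \lambda_f \leq \lambda'$, only that $\mathcal{E}$ is an ideal and that $f$ has finitely many essential arguments. Hence $D_{\mathcal{E}} \in \Inv_{\lambda'}(C) \subseteq \Inv_{<\lambda}(C)$, giving $D^{<\lambda}_{\mathrm{fin}} \subseteq \Inv_{<\lambda}(C)$.

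For the reverse implication, suppose $\Inv_{<\lambda}(C) \supseteq D^{<\lambda}_{\mathrm{fin}}$ and pick $f \in C$ of arity $\lambda_f < \lambda$. Because $\lambda$ is a limit cardinal with $\lambda > |A|$, one can choose $\lambda'$ with $\max(|A|,\lambda_f) \leq \lambda' < \lambda$, so the hypotheses $|A|, \lambda_f \leq \lambda'$ of Proposition \ref{finitaryoperation} are satisfied; since $f$ preserves every $D_\mathcal{E} \in D^{\lambda'}_{\mathrm{fin}}$, the direction $(2)\Rightarrow(1)$ yields that $f$ is essentially finitary.

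Putting the equivalence together with Theorem \ref{maincorrespondence} finishes the corollary: if $C$ is essentially finitary then $R:=\Inv_{<\lambda}(C)$ contains $D^{<\lambda}_{\mathrm{fin}}$ and $\Pol_{<\lambda}(R)=C$; conversely, if $R \supseteq D^{<\lambda}_{\mathrm{fin}}$ then $C:=\Pol_{<\lambda}(R)$ satisfies $\Inv_{<\lambda}(C)=R \supseteq D^{<\lambda}_{\mathrm{fin}}$ by Theorem \ref{maincorrespondence}, so the equivalence forces $C$ to be essentially finitary, and $\Inv_{<\lambda}(C) = R$ is already recorded. The only real subtlety is the cardinal bookkeeping in the reverse implication---namely, exhibiting $\lambda' < \lambda$ satisfying simultaneously $\lambda' \geq |A|$ and $\lambda' \geq \lambda_f$---and this is supplied precisely by the hypothesis that $\lambda$ is a (strong) limit cardinal exceeding $|A|$.
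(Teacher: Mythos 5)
Your proposal is correct and follows essentially the same route as the paper: both reduce the corollary to the equivalence ``$C$ is essentially finitary $\iff$ $\Inv_{<\lambda}(C)\supseteq D^{<\lambda}_{\mathrm{fin}}$'' via Proposition \ref{finitaryoperation} and then invoke Theorem \ref{maincorrespondence} for the bijectivity. Your version is slightly more careful than the paper's about the cardinal hypotheses of Proposition \ref{finitaryoperation} (choosing $\lambda'\geq\max(|A|,\lambda_f)$ in one direction and noting the other direction does not need the size hypothesis), which is a welcome but not essentially different refinement.
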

\begin{proof}
If $<\mc\lambda$-ary clone $C$ only contains essentially finitary operations,
then $\Inv_{<\lambda}(C)\supset D^{<\lambda}_{{\rm fin}}$ holds by the previous theorem.

To prove the converse, suppose a $<\mc\lambda$-ary clone $C$ contains
an operation $f$ not essentially finitary.
Then, by the previous theorem, there is an equivalence relation
${\cal E}$ on $\lambda'<\lambda$ that $f$ does not preserve $D_{{\cal E}}$.
Therefore $\Inv_{<\lambda}(C)\not\supset D^{<\lambda}_{{\rm fin}}$.
\end{proof}

\end{document}